\newcommand{\rounddown}[1]{\left\lfloor#1\right\rfloor}
\newcommand{\Z}{\mathbb{Z}}
\begin{document}
\title{A Min-Max Relation on Dicuts and Dijoins in Weighted Chordal Digraphs}
%

\author{Gérard Cornuéjols\orcidID{0000-0002-3976-1021} \and
Siyue Liu\orcidID{0000-0002-3553-9431} \and \\
R. Ravi\orcidID{0000-0001-7603-1207}}
\authorrunning{G. Cornuéjols et al.}

\institute{Carnegie Mellon University, Pittsburgh, USA\\
\email{\{gc0v,siyueliu,ravi\}@andrew.cmu.edu}}

\maketitle              
\begin{abstract}
In a digraph, a dicut is a cut where all the arcs cross in one direction. A dijoin is a subset of arcs that intersects every dicut. Edmonds and Giles conjectured that in a weighted digraph, the minimum weight of a dicut is equal to the maximum size of a packing of dijoins. This has been disproved. However, the unweighted version conjectured by Woodall remains open. We prove that the Edmonds-Giles conjecture is true if the underlying undirected graph is chordal. We also give a strongly polynomial-time algorithm to construct such a packing.

\keywords{Edmonds-Giles conjecture  \and Chordal graph \and Dicut \and Dijoin.}
\end{abstract}

\section{Introduction}
In a digraph $D=(V,A)$, for a proper node subset $U\subsetneqq V, U\neq\emptyset$, denote by $\delta^+(U)$ and $\delta^-(U)$ the arcs leaving and entering $U$, respectively. Let $\delta(U):=\delta^+(U)\cup \delta^-(U)$. A \textit{dicut} is a set of arcs of the form $\delta^+(U)$ such that $\delta^-(U)=\emptyset$. A \textit{dijoin} is a set of arcs that intersects every dicut at least once. 
If $D$ is a weighted digraph with arc weights $w:A\rightarrow \mathbb{Z}_+$, we say that $(D,w)$ can \textit{pack} $k$ dijoins if there exist $k$ dijoins $J_1,...,J_k$ such that no arc $e$ is contained in more than $w(e)$ of $J_1,...,J_k$.
In this case, $J_1,...,J_k$ is a \textit{packing} of dijoins in $(D,w)$. 
Assume that the minimum weight of a dicut is $\tau$. Clearly, $\tau$ is an upper bound on the maximum number of dijoins in a packing. 
Edmonds and Giles \cite{edmonds1977min} conjectured that the other direction also holds true:
\begin{conjecture}[Edmonds-Giles]\label{conj:E-G}
    In a weighted digraph, the minimum weight of a dicut is equal to the maximum size of a packing of dijoins.
\end{conjecture}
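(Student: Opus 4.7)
The plan would be to leverage the Lucchesi--Younger theorem, which is the fractional analogue of the conjecture: the minimum weight of a dicut equals the maximum value of a \emph{fractional} packing of dijoins. The task then reduces to establishing integrality of an optimal dual solution. The most natural attack is by induction on $\tau$, the minimum dicut weight. Given $\tau \geq 1$, I would try to produce a single dijoin $J$ such that the residual weighting $w - \mathbf{1}_J$ remains nonnegative and has minimum dicut weight exactly $\tau - 1$; invoking the inductive hypothesis on $(D, w - \mathbf{1}_J)$ then yields $\tau - 1$ further dijoins which, together with $J$, form the required packing of size $\tau$.

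To construct such a $J$, I would analyze the family $\mathcal{F}$ of tight dicuts, i.e., those of weight exactly $\tau$. A standard uncrossing argument shows that if $\delta^+(U)$ and $\delta^+(U')$ are tight and $U,U'$ cross, then $\delta^+(U \cap U')$ and $\delta^+(U \cup U')$ are also tight, and submodularity forces no arc to cross between $U \setminus U'$ and $U' \setminus U$. This lets one reduce $\mathcal{F}$ to a laminar subfamily whose dicuts still witness every obstruction that could block the inductive step. I would then build $J$ greedily, picking one arc from each inclusion-minimal member of the laminar family and extending outwards, ensuring that every tight dicut is hit exactly once and no arc is used beyond its weight budget.

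The main obstacle, and the decisive one, is that Conjecture~\ref{conj:E-G} as literally stated is \emph{false}: counterexamples due to Schrijver and others exhibit digraphs in which the minimum dicut weight strictly exceeds the maximum integer packing of dijoins, and the inductive step above collapses precisely because no dijoin can hit every tight dicut exactly once without over-saturating some arc. The genuine difficulty is therefore identifying a structural hypothesis on $D$ under which the crossing patterns of tight dicuts are mild enough for the greedy selection to succeed. Consequently, any successful argument must narrow the class of inputs, and the natural next step in the paper is to restrict to a graph class whose underlying undirected structure forbids the bad crossing configurations responsible for the known counterexamples.
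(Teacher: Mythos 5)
You are right that this statement admits no proof: Conjecture~\ref{conj:E-G} is false, and the paper itself says so, citing Schrijver's counterexample and Younger's infinite family (Figure~\ref{fig:Younger}), where the minimum dicut weight is $2$ but at most one dijoin can be packed. Your closing diagnosis --- that any successful argument must restrict the class of digraphs so as to exclude the crossing structure present in the known counterexamples --- is exactly the move the paper makes: all known counterexamples contain a chordless cycle of length at least $6$, and Theorem~\ref{thm:main} proves the conjecture for digraphs whose underlying undirected graph is chordal. However, the technique you sketch for the restricted setting (induction on $\tau$ via Lucchesi--Younger, uncrossing tight dicuts into a laminar family, and greedily extracting one dijoin at a time) is not what the paper does, and it is worth noting that this peel-off-one-dijoin strategy is precisely the step that is not known to work even for large $\tau$ in the weighted setting. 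The paper instead inducts on $|V|$: it deletes a simplicial vertex $v$ (Lemma~\ref{lemma:simplicial}), shows that no new dicuts are created (Lemma~\ref{lemma:dicut_preserve}), redistributes the weight of the arcs incident to $v$ onto the clique $N(v)$ via a bipartite $u'$-matching so that the minimum dicut weight is preserved (Theorem~\ref{thm:weight_tau}), and then lifts the packing of $\tau$ dijoins in the smaller digraph back to the original one (Theorem~\ref{thm:map_back}). So: your assessment of the conjecture's status is correct and matches the paper, but the constructive mechanism the paper uses for its positive result is structurally different from the one you propose.
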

Schrijver \cite{schrijver1980counterexample} disproved the above conjecture by providing a counterexample (see Figure \ref{fig:Younger}). However,
the unweighted version of the Edmonds-Giles conjecture, proposed by Woodall \cite{woodall1978menger}, is still open.
Woodall's conjecture asserts that the minimum size of a dicut equals the maximum number of disjoint dijoins.

There has been great interest in understanding when the Edmonds-Giles conjecture is true (e.g., \cite{hwang2022edmonds,williams2004packing,abdi2023packing,abdi2024arc,shepherd2005visualizing,chudnovsky2016disjoint,schrijverobervation}), which could be an important stepping stone towards resolving Woodall's conjecture. Despite many efforts, it is still not known whether there exists a packing of $2$ dijoins in a weighted digraph when $\tau$ is large enough \cite{shepherd2005visualizing}, in contrast to the existence of a packing of $\rounddown{\frac{\tau}{6}}$ dijoins in unweighted digraphs \cite{cornuejols2025approximately}. Abdi et al. \cite{abdi2025strong} recently showed that the Edmonds-Giles conjecture is true when $\tau=2$ and the digraph induced by the arcs of weight $w(e) \geq 1$ is (weakly) connected, settling a conjecture of Chudnovsky et al. \cite{chudnovsky2016disjoint}. Abdi et al. \cite{abdi2023packing} proved the Edmonds-Giles conjecture to be true when some parameter characterizing the ``discrepancy" of a digraph takes several special values. Among the special cases where the Edmonds-Giles conjecture was proved to be true, probably the most well-known one is \textit{source-sink connected} digraphs, where there is a directed path going from every source to every sink. This result was proved independently by Schrijver \cite{schrijver1982min}, and Feofiloff and Younger \cite{feofiloff1987directed}. Lee and Wakabayashi \cite{lee2001note} proved that the conjecture also holds true for digraphs whose underlying graphs are series-parallel.

\begin{figure}[htbp]
	\centering
	\includegraphics[scale=0.3]{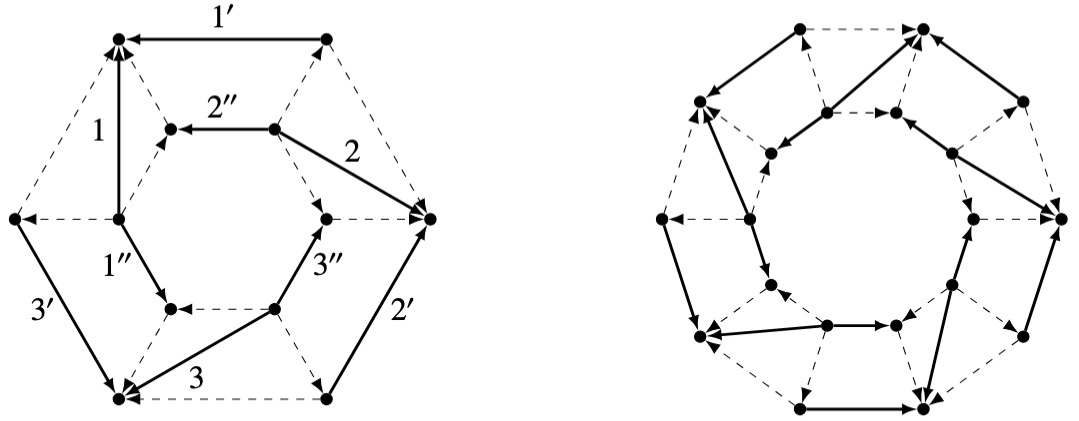}
 \caption{Left: Schrijver's counterexample to the Edmonds-Giles conjecture. The solid arcs have weight $1$ and the dashed arcs have weight $0$. The minimum weight of a dicut equals $2$. Among all minimal dicuts, six have weight $2$ such as $D_1\supseteq \{1,1'\}$ and $D_2\supseteq \{1,1''\}$, and four have weight $3$ such as $D_3\supseteq \{1,2,3\}$ and $D_4\supseteq \{1',2'',3\}$. One can verify that the solid arcs do not contain $2$ disjoint dijoins. Right: a generalization of Schrijver's example due to Younger. It extends to an infinite family with a ring of length $4k+2$ and $2k+1$ solid paths for $k\in \Z_{\geq 1}$.}
  \label{fig:Younger}
\end{figure}

Younger extended Schrijver's counterexample to an infinite family \cite{williams2004packing} (see Figure \ref{fig:Younger}). 
Cornu\'ejols and Guenin gave two more counterexamples \cite{cornuejols2002dijoins}. 
Williams \cite{williams2004packing} constructed more infinite classes, but defined a notion of minimality that reduces these counterexamples to the earlier known classes.  We observe that all of the known counterexamples to the Edmonds-Giles conjecture have a large chordless cycle (actually of length at least $6$). This motivates us to prove the following result.

A graph is \textit{chordal} if there is no chordless cycle of length more than $3$.

\begin{theorem}\label{thm:main}
    The Edmonds-Giles conjecture is true for digraphs whose underlying undirected graph is chordal.
\end{theorem}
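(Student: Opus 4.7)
My plan is to induct on $|V(G)|$ using the clique-separator decomposition of chordal graphs.

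For the base case, when the underlying graph $G$ is complete (which includes the trivial $|V|=1$), I observe that $D$ is automatically source-sink connected: every source $s$ has $\delta^-(s)=\emptyset$ and every sink $t$ has $\delta^+(t)=\emptyset$, so the unique arc between them in the complete graph $G$ is forced to be $s\to t$. The theorem then follows from the Edmonds-Giles result for source-sink connected digraphs due to Schrijver and to Feofiloff-Younger.

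For the inductive step, when $G$ is chordal but not complete, Dirac's theorem provides a clique separator $S$. Partition $V\setminus S = V_1 \sqcup V_2$ with $V_1, V_2 \neq \emptyset$ and no edges between them, and let $D_i = D[V_i \cup S]$, which are chordal digraphs on strictly fewer vertices. By a case analysis on how a dicut side $U$ of $D$ meets $V_1, V_2, S$, every dicut of $D$ projects to at least one non-trivial dicut in $D_1$ or $D_2$, and the corresponding cut in $D_i$ lies inside $\delta^+_D(U)$. Consequently, the arc-union $J_1 \cup J_2$ of any dijoins $J_i$ in $D_i$ is a dijoin of $D$.

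The plan is then to pack $\tau$ dijoins in each $D_i$ in a way that combines to a packing of $\tau$ dijoins in $D$. A natural approach is to split the arc-weights on $A(S) := \{e \in A : \text{both endpoints of } e \text{ lie in } S\}$ as $w = w_1 + w_2$ in nonnegative integers, chosen so that each $(D_i, w_i)$ has minimum dicut weight $\geq \tau$, and apply the inductive hypothesis to each piece. The arc-unions of paired packings then form $\tau$ dijoins of $D$ respecting $w$: on $A(S)$, total usage is bounded by $w_1 + w_2 = w$, and elsewhere each arc lies in only one $D_i$.

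The main obstacle is that a dicut side of $D_i$ need not be a dicut side in $D$ (arcs from $V_{3-i}$ may enter it), so a naive split on $A(S)$ may leave some dicut in $D_i$ with $w_i$-weight strictly less than $\tau$. To overcome this, I would establish an LP-duality and uncrossing result showing that every tight dicut of $D_i$ extends to a dicut of $D$ of $w$-weight $\geq \tau$ by adjoining the reverse-reachable vertices from the opposite side through $S$; this furnishes the feasibility inequalities needed for the splitting. When the split on $A(S)$ alone is insufficient, I would strengthen $D_i$ with virtual arcs encoding the other side's contribution to cut weights, or iterate with a different clique separator (e.g., $S = N(v)$ for a simplicial $v$, so that one of $D_1, D_2$ has complete underlying graph and is handled by the base case). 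A strongly polynomial algorithm then follows by computing the separator, solving the splitting step, and recursing.
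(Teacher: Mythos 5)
Your base case is fine (in an orientation of a complete graph every source has an arc directly to every sink, so the Schrijver/Feofiloff--Younger theorem applies), and the observation that the union of dijoins of the two pieces $D_1,D_2$ of a clique-separator decomposition is a dijoin of $D$ is correct. But the proof has a genuine gap exactly at the step you flag as the ``main obstacle'': you never establish that the weight on $A(S)$ can be split as $w=w_1+w_2$ so that \emph{both} $(D_1,w_1)$ and $(D_2,w_2)$ have minimum dicut weight at least $\tau$. The difficulty is real: $D_i$ can have dicuts $\delta^+_{D_i}(U_i)$ that are not restrictions of any dicut of $D$, because arcs from $V_{3-i}$ enter $U_i$ in $D$. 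Your proposed repair --- extending a tight dicut of $D_i$ by ``adjoining the reverse-reachable vertices from the opposite side through $S$'' --- does not obviously yield a dicut of $D$: closing $U_i$ under in-neighbors can force vertices of $S\setminus U_i$ back into the set and destroy the cut, so the claimed extension lemma is unproven and, as stated, dubious. Moreover, even if every tight dicut of each $D_i$ did extend to a $w$-weight-$\geq\tau$ dicut of $D$, that alone does not yield the existence of a single integral split $w_1+w_2$ satisfying both families of lower-bound constraints simultaneously; that would need an additional combinatorial argument (uncrossing/supermodularity) that is only named, not carried out. The fallbacks (``virtual arcs,'' ``iterate with a different separator'') are likewise sketches with no argument.

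For comparison, the paper avoids this issue by not decomposing along a general clique separator at all. It deletes a single simplicial vertex $v$ (after contracting directed cycles), proves the key lemma that deletion of $v$ creates \emph{no new} dicuts --- every dicut of $D-v$ is a dicut of $D$ restricted to the remaining arcs --- and then explicitly transfers the weight of the arcs incident to $v$ onto the clique $N(v)$ via a perfect bipartite $u'$-matching between the in- and out-neighbors of $v$, so that every dicut weight is exactly preserved. Finally it shows how to map a packing of $\tau$ dijoins of $D-v$ back to one of $D$, with a separate argument when $\delta(\{v\})$ is itself a dicut. Your last suggestion (take $S=N(v)$ for a simplicial $v$) puts you in essentially that configuration, but the entire content of the theorem lies in the weight-transfer and map-back steps, which your proposal leaves open. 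As written, the argument is incomplete.
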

This result only depends on the underlying undirected graph, regardless of its orientation. Our proof constructs a packing of $\tau$ dijoins in strongly polynomial time.

Chordal graphs are perfect \cite{hajnal58}, and in fact they are closely associated with the birth of perfect graph theory \cite{berge1960}. In the literature, chordal graphs are also referred to as {\em triangulated graphs}, or {\em rigid-circuit graphs}.  Dirac \cite{dirac1961rigid} observed the following striking property of chordal graphs.

 In a graph, a vertex is called  \emph{simplicial} if its neighbors form a clique.
 
\begin{lemma}[\cite{dirac1961rigid}]\label{lemma:simplicial}
Every chordal graph has a simplicial vertex. Moreover,
after removing a simplicial vertex, the resulting graph remains chordal.
\end{lemma}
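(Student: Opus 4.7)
The plan is to prove the stronger statement that every chordal graph is either a clique or contains two non-adjacent simplicial vertices, from which Lemma~\ref{lemma:simplicial} follows immediately. I would proceed by induction on $|V(G)|$. The base case $|V(G)|=1$ is trivial, and in the inductive step, if $G$ is a clique then every vertex is simplicial, so assume $G$ is not a clique.

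Pick two non-adjacent vertices $a,b \in V(G)$ and let $S \subseteq V(G) \setminus \{a,b\}$ be an inclusion-minimal $a$-$b$ vertex separator. Let $A$ and $B$ denote the vertex sets of the connected components of $G - S$ containing $a$ and $b$, respectively. The key structural claim, and the main technical step, is that $S$ induces a clique in $G$. To prove this, suppose for contradiction that $u,v \in S$ are non-adjacent. By the minimality of $S$, both $u$ and $v$ have a neighbor in $A$ and a neighbor in $B$ (otherwise $S \setminus \{u\}$ or $S \setminus \{v\}$ would still separate $a$ from $b$). Hence there is a shortest $u$-$v$ path $P_A$ with all interior vertices in $A$, and similarly a shortest $u$-$v$ path $P_B$ with all interior vertices in $B$. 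Concatenating $P_A$ and $P_B$ produces a cycle of length at least $4$, whose only potential chord would lie between a vertex of $A$ and a vertex of $B$ (ruled out by $S$-separation) or within $P_A$ or $P_B$ (ruled out by the shortest-path choice) or be the edge $uv$ (ruled out by assumption). This contradicts chordality of $G$, proving $S$ is a clique.

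Now apply the inductive hypothesis to the induced subgraph $G[A \cup S]$, which is chordal as an induced subgraph of $G$. If $G[A \cup S]$ is a clique, then $a$ is simplicial in $G[A \cup S]$; since $S$ separates $A$ from $B$, all neighbors of $a$ in $G$ lie in $A \cup S$, so $a$ is simplicial in $G$. Otherwise, $G[A \cup S]$ contains two non-adjacent simplicial vertices; since $S$ is a clique, at least one of them, call it $x_A$, lies in $A$, and again its neighbors in $G$ are contained in $A \cup S$ and form a clique there. The same argument applied to $G[B \cup S]$ produces a simplicial vertex $x_B \in B$. Since $S$ separates $x_A$ from $x_B$, these two vertices are non-adjacent in $G$, proving the strengthened claim.

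Finally, for the second assertion of the lemma, if $v$ is any vertex (in particular a simplicial one), then $G - v$ is an induced subgraph of $G$; any chordless cycle of length at least $4$ in $G-v$ would also be a chordless cycle in $G$, contradicting chordality of $G$. Hence $G - v$ is chordal. I expect the main obstacle to be the clique-separator step: one must carefully verify that no chord can appear in the concatenated cycle $P_A \cup P_B$, which relies simultaneously on minimality of $S$ (to secure the neighbors of $u,v$ on both sides), on $S$-separation (to forbid $A$-$B$ chords), and on the shortest-path choice (to forbid intra-side chords).
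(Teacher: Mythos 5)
Your proof is correct. The paper does not prove this lemma at all---it is quoted directly from Dirac's 1961 paper---and your argument is precisely the classical one: strengthen to ``every non-complete chordal graph has two non-adjacent simplicial vertices,'' show that a minimal separator induces a clique by building a chordless cycle from shortest paths through the two sides, and recurse on $G[A\cup S]$ and $G[B\cup S]$. All the delicate points are handled (minimality of $S$ gives neighbors of $u,v$ on both sides; the cycle has length at least $4$; the inductive subgraphs are strictly smaller since both $A$ and $B$ are nonempty; a simplicial vertex of $G[A\cup S]$ lying in $A$ is simplicial in $G$ because its $G$-neighborhood stays in $A\cup S$), and the second assertion follows correctly from heredity of chordality under induced subgraphs.
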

 We will use this result in our proof of Theorem~\ref{thm:main}. Lemma~\ref{lemma:simplicial} implies that one can recursively remove simplicial vertices. This process is called a \textit{perfect elimination scheme}. Rose \cite{rose1970triangulated} showed that a graph is chordal if and only if it has a perfect elimination scheme. Rose, Lueker and Tarjan \cite{roseluekertarjan1976} showed how to find a perfect elimination scheme efficiently and how to recognize chordal graphs in linear time. Chordal graphs have other elegant characterizations. For example, Buneman \cite{Buneman1974} showed that a graph is chordal if and only if it is the intersection graph of a family of subtrees of a tree.

\section{Proof of Theorem \ref{thm:main}}

Let $D=(V,A)$ be a digraph whose underlying undirected graph is chordal and let $w: A\rightarrow \Z_{\geq 0}$ be a weight function. Suppose the minimum weight of a dicut is $\tau$. 

    Note that, if $D$ contains a directed cycle $C$, the 
    arcs in $C$ do not belong to any dicut. Therefore no minimal dijoin of $D$ contains an arc of $C$. Furthermore, after contracting all the arcs of $C$, the underlying graph remains chordal. It follows that
    we can assume w.l.o.g. that $D$ has no directed cycle. 

\subsection{Removing a simplicial vertex}

    By Lemma \ref{lemma:simplicial}, there exists a simplicial vertex $v$, and after removing $v$ the underlying graph remains chordal. 
    
    Let us begin with some observations. First, the neighbor set of $v$, denoted by $N(v)$, induces an {\it acyclic tournament}, that is, there is a total ordering $v_1,...,v_k$ of the nodes in $N(v)$ such that $v_iv_j\in A$ if and only if $1\leq i<j \leq k$. This is because $N(v)$ is a clique and there is no directed cycle. Moreover, observe that if $vv_i\in A$, then we must have $vv_j\in A$ for every $i<j\leq k$. Otherwise, $v, v_i,v_j$ would form a directed cycle. Therefore, there is some $s\in\{0,1,...,k\}$ such that $v_iv\in A$ for $1\leq i\leq s$ and $vv_i\in A$  for $s+1\leq i\leq k$ (see Figure \ref{fig:simplicial}). When $s=0$, we have $vv_i \in A$ for all $i \in [k]$, i.e., $v$ is a source.  
When $s=k$, we have $v_i v \in A$ for all $i \in [k]$, i.e., $v$ is a sink.
\begin{figure}[htbp]
	\centering
	\includegraphics[scale=0.22]{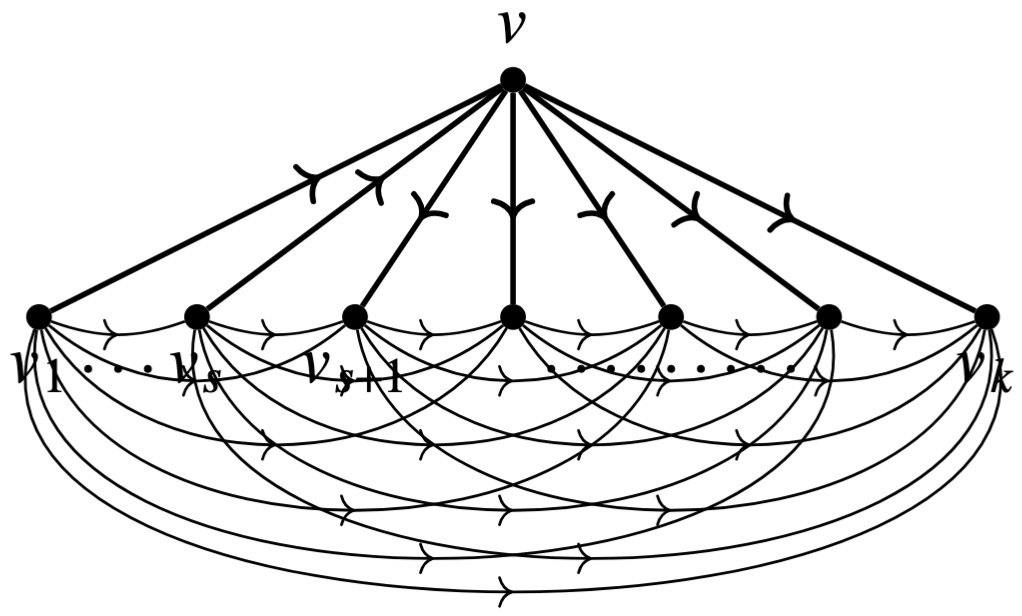}
  \caption{Orientations of the arcs adjacent to the simplicial vertex $v$ and of the arcs in the clique induced by its neighbors $N(v)=\{v_1,...,v_k\}$.}
  \label{fig:simplicial}
\end{figure}

     We prove Theorem~\ref{thm:main} by induction on $|V|$. Let $D'=(V\setminus\{v\},A')$ be the digraph obtained by deleting a simplicial vertex $v$. 

     Usually, after deleting arcs of a digraph, the new digraph has more dicuts than the original one because some non-dicuts may become dicuts. As a result, the minimum weight of a dicut can drop drastically. This is the general difficulty in proving Woodall's conjecture by induction. However, this is not the case for $D'$. In particular, we prove the following:
\begin{lemma}\label{lemma:dicut_preserve}
    Every dicut in $D'$ is a dicut in $D$ restricted to the arcs in $A'$.
\end{lemma}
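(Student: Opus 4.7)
The plan is to start from an arbitrary dicut $\delta^+_{D'}(U)$ in $D'$ and to lift it to a dicut $\delta^+_D(U')$ of $D$ by deciding whether or not to place the simplicial vertex $v$ on the same side as $U$. Since the only arcs of $A\setminus A'$ are those incident to $v$, once we guarantee $\delta^-_D(U')=\emptyset$, the identity $\delta^+_D(U')\cap A'=\delta^+_{D'}(U)$ will follow automatically from bookkeeping on the arcs touching $v$.

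Concretely, I would set $U':=U$ if $v$ has no out-neighbour in $U$, and $U':=U\cup\{v\}$ otherwise, and verify $\delta^-_D(U')=\emptyset$ in each case. If no $v_i\in U$ satisfies $i>s$, then no arc $vv_i$ enters $U$, so $\delta^-_D(U)=\delta^-_{D'}(U)=\emptyset$ and $\delta^+_D(U)$ is a dicut of $D$; the arcs of $\delta^+_D(U)$ outside $A'$ are exactly the arcs $v_iv$ with $v_i\in U$ and $i\leq s$, which disappear when we intersect with $A'$. In the other case, pick some $v_j\in U$ with $j>s$ and use the tournament structure on $N(v)$ from the paragraph preceding the lemma: for every $i<j$ the arc $v_iv_j$ lies in $A'$, so if $v_i\notin U$ it would witness $\delta^-_{D'}(U)\neq\emptyset$. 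This forces $\{v_1,\dots,v_s\}\subseteq U$, so every in-neighbour of $v$ is already in $U$, whence $\delta^-_D(U\cup\{v\})=\emptyset$; the arcs of $\delta^+_D(U\cup\{v\})$ outside $A'$ are the arcs $vv_i$ with $i>s$ and $v_i\notin U$, and again restricting to $A'$ returns $\delta^+_{D'}(U)$.

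The only substantive point, and really the heart of the argument, is the deduction $\{v_1,\dots,v_s\}\subseteq U$ in the second case. This is where chordality enters, through the fact that $N(v)$ is a clique and hence (together with acyclicity) a tournament in which a ``late'' member of $U$ drags every earlier neighbour of $v$ into $U$ as well. The remaining checks, namely that $U'$ is a proper nonempty subset of $V$ (immediate, since $U\subsetneq V\setminus\{v\}$ is nonempty) and that the restriction to $A'$ matches on both sides, are purely definitional.
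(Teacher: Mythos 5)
Your proposal is correct and follows essentially the same route as the paper: both lift the dicut by deciding whether to place the simplicial vertex $v$ inside $U$, using the tournament structure on $N(v)$ and the split of its neighbours into in-neighbours $v_1,\dots,v_s$ and out-neighbours $v_{s+1},\dots,v_k$. The only cosmetic difference is that the paper organizes the cases by where $\delta^+(U)$ crosses the directed path $v_1\rightarrow\cdots\rightarrow v_k$, while you case on whether $v$ has an out-neighbour in $U$ and then derive $\{v_1,\dots,v_s\}\subseteq U$ directly; the underlying argument is identical.
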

\begin{proof}
    Given a dicut $\delta^+(U)$ in $D'$, notice that it intersects the directed path $v_1\rightarrow v_2\rightarrow\ldots\rightarrow v_k$ at most once. If it does not intersect the directed path, then either $\{v_1,...,v_k\}\subseteq U$ or $\{v_1,...,v_k\}\cap U=\emptyset$. Assume the former and the latter case follows by symmetry. Then, $\delta^+(U\cup\{v\})$ is a dicut in $D$, because $v$ is not connected to any vertex in $V\setminus U$. Otherwise, assume that $\delta^+(U)$ intersects the directed path on $v_rv_{r+1}$ for some $r\in\{1,...,k-1\}$. 
    We have $\{v_1,...,v_k\}\cap U=\{v_1,...,v_r\}$.
    If $r\leq s$, then $\delta^+(U)$ is a dicut in $D$. This is because, since $U\cap \{v_{s+1},...,v_k\}=\emptyset$, there is no arc going from $v$ to $U$. Similarly, if $r\geq s+1$, then $\delta^+(U\cup\{v\})$ is a dicut in $D$.
\end{proof}
      
\subsection{Transferring the weight}
    Although Lemma \ref{lemma:dicut_preserve} shows that deleting $v$ will not create new dicuts, the weight of each dicut can decrease. To counter this, we transfer the weight of arcs adjacent to $v$ to the clique $N(v)$ to make sure that the weight of each dicut does not decrease.

    Next, we give a construction of a weight function $w':A'\rightarrow \Z_{\geq 0}$ such that the minimum weight of a dicut in $A'$ under the new weight $w'$ is at least $\tau$. For every $e\in A'$ with at most one endpoint in $\{v_1,...,v_k\}$, let $w'(e)=w(e)$. Denote by $u_i$ the weight of the arc connecting $v$ and $v_i$, $\forall i\in[k]$. 
    
    \paragraph{\textbf{Case 1}}: $\delta(v)$ is not a dicut.
    
    We may assume w.l.o.g. that $\sum_{i=1}^s u_i\leq \sum_{i=s+1}^k u_i$ (otherwise we
    can flip the direction of all arcs in $D$ and consider the resulting digraph). Now, we assign an arbitrary weight $u'\leq u$ on $N(v)$ such that $\sum_{i=s+1}^k u'_i=\sum_{i=1}^s u'_i=\sum_{i=1}^s u_i$ and then construct a bipartite $u'$-matching between $\{v_1,...,v_s\}$ and $\{v_{s+1},...,v_k\}$. For simplicity, we construct $u'$ in the following way. Let $t$ be the largest index in $\{s+1,...,k\}$ satisfying $\sum_{i=t+1}^k u_i\leq \sum_{i=1}^s u_i\leq \sum_{i=t}^k u_i$. Define 
\begin{equation}\label{eq:w'-matching}
    \begin{aligned}
        u'_i=\begin{cases}
        u_i, & i\in\{1,...,s\}\cup\{t+1,...,k\};\\
        \sum_{i=1}^s u_i-\sum_{i=t+1}^k u_i, & i=t;\\
        0, & o/w.
    \end{cases}
    \end{aligned} 
\end{equation}
    This satisfies $\sum_{i=1}^s u'_i=\sum_{i=s+1}^k u'_i$. Since the arcs in between $\{v_1,...,v_s\}$ and $\{v_{s+1},...,v_k\}$ form a complete bipartite graph, there exists a perfect $u'$-matching $M$. For each $i\in\{1,...,s\}$ and $j\in\{s+1,...,k\}$, suppose $M$ has multiplicity $x_{ij}\in\Z_{\geq 0}$ on the arc $v_iv_j$. Let \begin{equation}\label{eq:new_weight}
    \begin{aligned}
        w'(v_iv_j)=\begin{cases}
        w(v_iv_j)+x_{ij}, & i\in\{1,...,s\},\ j\in\{s+1,...,k\};\\
         w(v_{i}v_j)+u_j-u_j', & i=s+1,\ j\in\{s+2,...,k\};\\
        w(v_iv_j), & o/w.
    \end{cases}
    \end{aligned} 
\end{equation}
An example is given in Figure \ref{fig:transfer}.
\begin{figure}[htbp]
	\centering
	\includegraphics[scale=0.26]{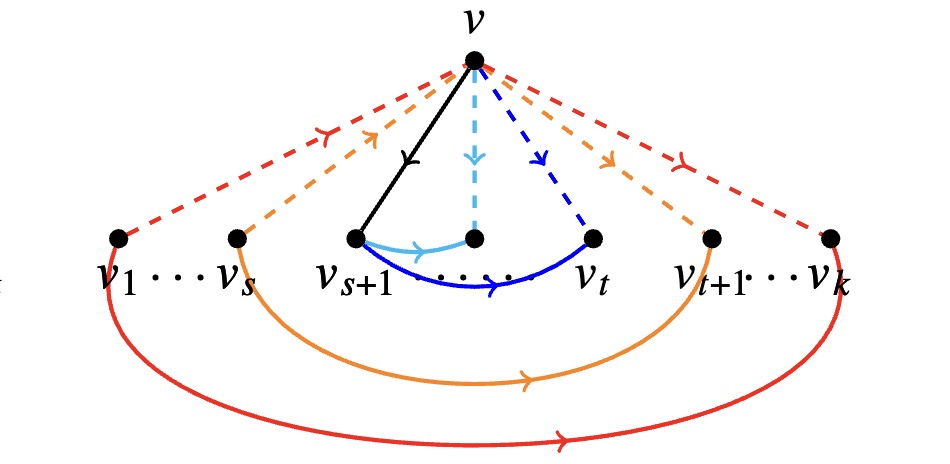}
  \caption{In the unweighted case, i.e., $u_1=u_2=\cdots=u_k=1$, one has $t=k-s+1$. Thus, $u'_1=\cdots=u'_s=1$, $u'_t=\cdots=u'_k=1$, and $u'_{s+1}=\cdots=u'_{t-1}=0$. The orange and red arcs form a perfect $u'$-matching. To go from $w$ to $w'$, the weights of all the colored solid arcs increase by $1$ and the weights of other arcs do not change.}
  \label{fig:transfer}
\end{figure} 

\paragraph{\textbf{Case 2}}: $\delta(v)$ is a dicut.

We may assume w.l.o.g. that $s=0$, which means $v$ is a source (the case $s = k$ corresponds to
the case $s = 0$ after flipping all arcs). In this case definition \eqref{eq:w'-matching} enforces $u'=0$ and definition \eqref{eq:new_weight} simplifies to
\[\begin{aligned}
        w'(v_iv_j)=\begin{cases}
         w(v_1v_j)+u_j, & j\in\{2,...,k\};\\
        w(v_iv_j), & o/w.
    \end{cases}
    \end{aligned}
\]

    \begin{theorem}\label{thm:weight_tau}
        The minimum weight of a dicut under $w'$ is at least $\tau$.
    \end{theorem}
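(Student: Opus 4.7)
The plan is to take an arbitrary dicut $\delta^+(U)$ of $D'$ and exhibit a ``lift'' to a dicut of $D$ whose $w$-weight equals $w'(\delta^+(U))$; combined with the lower bound $\tau$ on dicut weights in $D$, this yields $w'(\delta^+(U)) \geq \tau$. The lift is essentially supplied by the proof of Lemma~\ref{lemma:dicut_preserve}: because $\delta^+(U)$ is a dicut in $D'$, no arc of the directed path $v_1 \to v_2 \to \cdots \to v_k$ enters $U$, hence $U \cap N(v) = \{v_1,\ldots,v_r\}$ for some $r \in \{0,1,\ldots,k\}$, and the lift is either $\delta^+(U)$ itself (when $r \leq s$, Case~1) or $\delta^+(U \cup \{v\})$ (when $r \geq s+1$, Case~2).

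The main work is then a weight-accounting identity in each case. Since $w$ and $w'$ coincide off the clique $N(v)$, it suffices to compare the $w$-weight of the $v$-incident arcs in the lift with the total $w'-w$ excess on the arcs of $\delta^+(U)$ that sit inside $N(v)$. In Case~1, the $v$-incident arcs are $v_i v$ for $i \leq r$, of total $w$-weight $\sum_{i=1}^{r} u_i$, while the $w'-w$ excess comes only from the matching arcs $v_i v_j$ with $i \leq r$ and $j \geq s+1$ and equals $\sum_{i=1}^{r} u'_i = \sum_{i=1}^{r} u_i$ (using that $u'_i = u_i$ for $i \leq s$); the auxiliary arcs $v_{s+1} v_j$ from (\ref{eq:new_weight}) contribute nothing since $v_{s+1}\notin U$. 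In Case~2, the $v$-incident arcs are $v v_j$ for $j > r$, of total $w$-weight $\sum_{j=r+1}^{k} u_j$. The excess on $\delta^+(U)$ now has two parts: the matching arcs $v_i v_j$ with $i \leq s$ and $j > r$ contribute $\sum_{j=r+1}^{k} u'_j$ (summed column-wise, using that $v_i\in U$ for all $i \leq s \leq r$), and the auxiliary arcs $v_{s+1} v_j$ with $j > r \geq s+1$ contribute $\sum_{j=r+1}^{k}(u_j - u'_j)$. These add to exactly $\sum_{j=r+1}^{k} u_j$, closing the identity.

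I expect the main obstacle to lie in the bookkeeping of Case~2 rather than in any new idea. One must be careful that every $v_{s+1} v_j$ arc appearing in $\delta^+(U)$ indeed carries the intended residual $u_j - u'_j$, and must invoke the choice of $t$ in (\ref{eq:w'-matching}) to ensure $u'_j \leq u_j$ so that these residuals are nonnegative and precisely fill the deficit left by the bipartite $u'$-matching. The design of $u'$, which pushes mass as far to the right as possible within $\{v_{s+1},\ldots,v_k\}$, is what makes the column-wise sum $\sum_{j>r} u'_j$ (for $r \geq s+1$) behave well. With these observations in place, both identities collapse to one-line calculations and the theorem follows.
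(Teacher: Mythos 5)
Your proposal is correct and follows essentially the same route as the paper: lift the dicut $\delta^+(U)$ of $D'$ to $\delta^+(U)$ or $\delta^+(U\cup\{v\})$ in $D$ according to whether $U\cap N(v)=\{v_1,\ldots,v_r\}$ has $r\leq s$ or $r\geq s+1$, and verify that the $w'-w$ excess on the matching arcs (plus, in the second case, the auxiliary arcs $v_{s+1}v_j$) exactly compensates the lost weight $\sum_{i\leq r}u_i$ or $\sum_{j>r}u_j$ of the $v$-incident arcs. The only cosmetic difference is that you absorb the paper's separate case of a dicut avoiding the path $v_1\to\cdots\to v_k$ into the endpoints $r=0$ and $r=k$ of the same calculation.
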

    \begin{proof}
    For every dicut $\delta^+(U)$ in $D'$ that does not intersect $v_1\rightarrow v_2\rightarrow\ldots\rightarrow v_k$, either $\delta^+(U)$ or $\delta^+(U\cup \{v\})$ is also a dicut in $D$ with the same arc set, and thus its weight is at least $\tau$. Otherwise, assume $v_rv_{r+1}\in\delta^+(U)$ for some $r\in\{1,...k-1\}$. If $r\leq s$, then 
    \[
    \begin{aligned}
    w'(\delta^+(U))=&w(\delta^+(U))-\sum_{i=1}^r u_i+\sum_{i=1}^r\sum_{j=s+1}^k x_{ij}\\
    =&w(\delta^+(U))-\sum_{i=1}^r u_i+\sum_{i=1}^r u'_i=w(\delta^+(U))\geq \tau. 
    \end{aligned}
    \] 
    If $r\geq s+1$, then 
    \[
    \begin{aligned}
            w'(\delta^+(U))=&w(\delta^+(U\cup \{v\}))-\sum_{j=r+1}^k u_j+\sum_{i=1}^s\sum_{j=r+1}^k x_{ij}+\sum_{j=r+1}^k (u_j-u_j')\\
            =&w(\delta^+(U)\cup \{v\})-\sum_{j=r+1}^k u_j+\sum_{j=r+1}^k u'_j+\sum_{j=r+1}^k (u_j-u_j')\\
            =&w(\delta^+(U)\cup \{v\})\geq \tau.
    \end{aligned}
    \]
    Therefore, we proved that the minimum weight of a dicut in $D'$ is at least $\tau$.
\end{proof}

\subsection{Mapping the dijoins back}
    By the induction hypothesis, we can find a packing of $\tau$ dijoins in $(D',w')$. We prove that those dijoins can be used to construct a packing of $\tau$ dijoins in the original digraph $D$.

    \begin{theorem}\label{thm:map_back}
        If there exists a packing of $\tau$ dijoins in $(D',w')$, then there exists a packing of $\tau$ dijoins in $(D,w)$.
    \end{theorem}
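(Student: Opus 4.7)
The approach is to transform the packing $J_1',\ldots,J_\tau'$ in $D'$ into a packing of dijoins in $D$ by \emph{rerouting} each surplus arc usage in the clique $N(v)$ through the simplicial vertex $v$. I would single out those occurrences of matching arcs $v_iv_j$ (with $i\le s$, $j\ge s+1$) and of remainder arcs $v_{s+1}v_j$ (with $j\ge s+2$) in the packing that cause their multiplicity across the $J_l'$'s to exceed $w(v_iv_j)$ respectively $w(v_{s+1}v_j)$. A surplus copy of $v_iv_j$ in $J_l'$ would be removed and replaced by the pair $\{v_iv,\,vv_j\}$ in $J_l$; a surplus copy of $v_{s+1}v_j$ in $J_l'$ would be removed and replaced by the single arc $vv_j$. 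The number of surplus copies of $v_iv_j$ is bounded by $x_{ij}$ and of $v_{s+1}v_j$ by $u_j-u_j'$, so the multiplicity of $v_iv$ across the $\tau$ modified sets is at most $\sum_j x_{ij}=u_i'=u_i=w(v_iv)$, and the multiplicity of $vv_j$ is at most $\sum_i x_{ij}+(u_j-u_j')=u_j'+(u_j-u_j')=u_j=w(vv_j)$, matching the bookkeeping behind (\ref{eq:w'-matching}) and (\ref{eq:new_weight}).

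\textbf{Dijoin property.} By Lemma~\ref{lemma:dicut_preserve}, every non-degenerate dicut $\delta^+_D(U)$ of $D$ restricts to a dicut of $D'$, and is therefore hit by $J_l'$. If the hitting arc has been rerouted, then the case analysis in the proof of Lemma~\ref{lemma:dicut_preserve} shows that one of its replacement arcs lies in $\delta^+_D(U)$: for an arc $v_iv_j$ with $i\le s$ and $j\ge s+1$, either $v\in U$ (forcing $\{v_1,\ldots,v_s\}\subseteq U$ and hence $vv_j\in\delta^+_D(U)$), or $v\notin U$ (forcing $\{v_{s+1},\ldots,v_k\}\cap U=\emptyset$ and hence $v_iv\in\delta^+_D(U)$); for an arc $v_{s+1}v_j$ only the first case is possible, so $vv_j$ alone suffices. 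Hence each $J_l$ is a dijoin with respect to every non-degenerate dicut of $D$.

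\textbf{Main obstacle.} The delicate step is to handle the two degenerate dicuts $\{v\}$ and $V\setminus\{v\}$, which are not the restriction of any dicut of $D'$ and need not be hit either by $J_l'$ or by the replacements above. The dicut $V\setminus\{v\}$ arises only when $s=k$, in which case the WLOG inequality $\sum_{i\le s}u_i\le\sum_{i>s}u_i$ forces all $u_i=0$, hence $\tau=0$ and the claim is trivial. The dicut $\{v\}=\{vv_1,\ldots,vv_k\}$ arises when $s=0$ and has weight $\sum_i u_i\ge\tau$; some $J_l$ may lack any $vv_i$ after rerouting, and I would augment each such $J_l$ with a single arc $vv_i$, chosen so that the capacity $u_i=w(vv_i)$ is respected. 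The feasibility of this joint rerouting-plus-augmentation assignment is the hardest step: I would phrase it as a bipartite transportation problem between the $\tau$ dijoins and the arcs $vv_1,\ldots,vv_k$ where each column $j$ has upper capacity $u_j$ and a lower-bound constraint ensuring that at least $f_j=(\text{excess multiplicity of }v_1v_j)^+$ of the $v_1v_j$-containing dijoins are reassigned, and each row must be covered at least once. Hall's theorem (or an equivalent max-flow argument) using $\sum_i u_i\ge\tau$ and $f_j\le u_j$ then produces an integer solution, and since all intermediate operations are explicit and run in polynomial time, the construction also yields the strongly polynomial algorithm claimed after Theorem~\ref{thm:main}.
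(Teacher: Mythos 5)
Your Case 1 argument and your verification of the dijoin property coincide with the paper's proof, and your weight bookkeeping ($\sum_j x_{ij}=u_i'=u_i$ for the arcs $v_iv$, and $\sum_i x_{ij}+(u_j-u_j')=u_j$ for the arcs $vv_j$) is exactly the computation the paper leaves implicit; your dispatch of the degenerate dicut $\delta^-(\{v\})$ via the WLOG inequality is also correct and slightly more explicit than the paper. The gap is in Case 2 ($s=0$, $\delta^+(\{v\})$ a dicut), precisely the step you flag as hardest. Your transportation problem fixes, for each $j$, a lower bound of $f_j=(z_j-w(v_1v_j))^+$ reassignments to column $j$ coming from $v_1v_j$-containing dijoins, and you assert feasibility from $\sum_i u_i\ge\tau$ and $f_j\le u_j$ alone. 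Those two inequalities are not sufficient. If a single dijoin $J'$ of the given packing contains several arcs $v_1v_{j}$, each in excess, and is the only dijoin containing them, then your lower bounds force that one dijoin to absorb one unit of capacity from each of the corresponding columns while satisfying only one row's covering requirement; when $\sum_j u_j=\tau$ the total demand $(\tau-1)+\sum_j f_j$ then exceeds the total capacity $\sum_j u_j$, and Hall's condition for your network fails. Nothing in the induction hypothesis rules out such a packing, so feasibility cannot be waved through.

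The missing idea is a normalization of the packing \emph{before} any assignment: since $D$ is acyclic and $N(v)$ is a tournament, every dicut of $D'$ containing $v_1v_i$ also contains $v_1v_j$ for $i<j$ (if $v_1\in U$ and $v_i\notin U$ then $v_j\in U$ would place $v_iv_j\in\delta^-(U)$), so from any dijoin using two arcs of $\{v_1v_j\mid j=2,\dots,k\}$ one may delete the one with smaller index and still have a dijoin. After this reduction each dijoin uses at most one such arc, the families $\mathcal{J}'_j$ of rerouted dijoins are pairwise disjoint, and the count
$|\mathcal{J}'_1|=\tau-\sum_{j=2}^k\min\{z_j,u_j\}\le u_1+\sum_{j=2}^k\bigl(u_j-\min\{z_j,u_j\}\bigr)$
(using that $\delta(\{v\})$ has weight $\sum_j u_j\ge\tau$) closes the argument with no flow machinery at all. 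With that observation inserted, your construction goes through; without it, the feasibility claim is unsupported and, for the network as you formulated it, false in general.
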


    \begin{proof}
    Let $\mathcal{J}'=\{J'_1,...,J'_\tau\}$ be a packing of $\tau$ dijoins in $(D',w')$. We map each one to a dijoin of $D$ according to the following two cases.
    
    \paragraph{\textbf{Case 1}:} $\delta(v)$ is not a dicut (see Figure \ref{fig:weight_transfer1}).
    
    By Lemma \ref{lemma:dicut_preserve}, the family of dicuts in $D'$ is the same as the family of dicuts in $D$ restricted to $A'$. For each $i\in\{1,...,s\}$ and $j\in\{s+1,...,k\}$, suppose $y_{ij}$ dijoins in $\mathcal{J}'$ use $v_iv_j$. Let $\mathcal{J}'_{ij}\subseteq \mathcal{J}'$ be a family of dijoins with  $|\mathcal{J}'_{ij}|=\min\{x_{ij}, y_{ij}\}$ such that each $J'\in \mathcal{J}'_{ij}$ uses $v_iv_j$. Let $\mathcal{J}_{ij}:=\{J'-v_iv_j+v_iv+vv_j\mid J'\in \mathcal{J}'_{ij}\}$. Every $J\in \mathcal{J}_{ij}$ is a dijoin in $D$ because every dicut containing $v_iv_j$ also contains one of $v_iv$ and $vv_j$. Replace $\mathcal{J}'_{ij}$ with $\mathcal{J}_{ij}$ in $\mathcal{J}'$ and go to the next $(i,j)$. For each $j\in\{s+2,...,k\}$, suppose $z_j$ dijoins in $\mathcal{J}'$ use $v_{s+1}v_j$. Let $\mathcal{J}'_j\subseteq \mathcal{J}'$ be a family of dijoins with $|\mathcal{J}'_j|=\min\{z_j,u_j-u_j'\}$ such that each $J'\in \mathcal{J}'_j$ uses $v_{s+1}v_j$. Let $\mathcal{J}_j:=\{J'-v_{s+1}v_j+vv_j\mid J'\in \mathcal{J}'_j\}$. Every $J\in \mathcal{J}_j$ is a dijoin in $D$ because every dicut containing $v_{s+1}v_j$ also contains $vv_j$. Replace $\mathcal{J}'_j$ with $\mathcal{J}_j$ in $\mathcal{J}'$ and go to the next $j$.
    Let $\mathcal{J}$ be the new family of dijoins.
    By the way we construct $w'$,  $\mathcal{J}$ is indeed a valid packing in $(D,w)$.
\begin{figure}[htbp]
	\centering
	\includegraphics[scale=0.2]{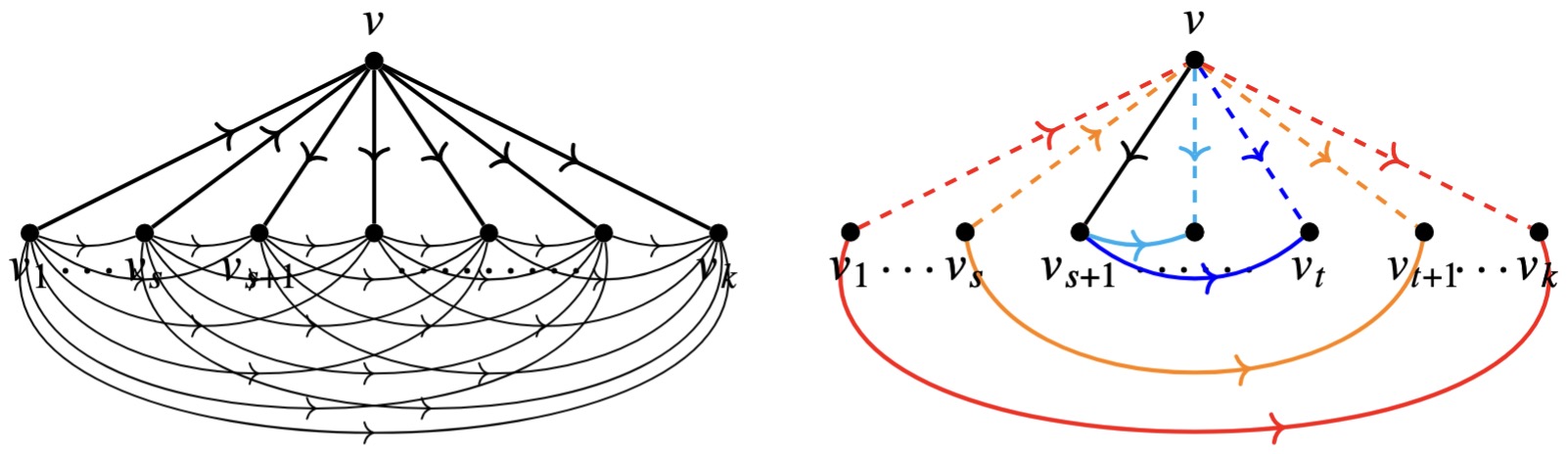}
  \caption{In the unweighted case, for each color class, one dijoin in $\mathcal{J}'$ using the solid arc is mapped to one dijoin using the dashed arc in $\mathcal{J}$ of the same color.}
  \label{fig:weight_transfer1}
\end{figure}

    \paragraph{\textbf{Case 2}:} $\delta(v)$ is a dicut (see Figure \ref{fig:weight_transfer2}). 
    
    It is further required that the dijoins cover $\delta(v)$. 
    Note that there is no $\mathcal{J}_{ij}$ in this case.
    Construct $\mathcal{J}_{j}'$ and $\mathcal{J}_{j}$ for $j\in\{2,...,k\}$ in the same way as in the previous case. Let $\mathcal{J}'_1:=\mathcal{J}'\backslash(\bigcup_{2\leq j\leq k} \mathcal{J}'_j)$
    be the remaining dijoins. To construct $\mathcal{J}$, we replace each $\mathcal{J}'_j$ by $\mathcal{J}_j$, $j\in\{2,...,k\}$. For each remaining dijoin $J'\in \mathcal{J}'_1$, we add one arc $vv_j$ to it for some $j\in \{1,...,k\}$. It suffices to argue that the number of remaining dijoins does not exceed the remaining total capacity of arcs incident to $v$.
    The crucial observation is that we can assume w.l.o.g. that every dijoin in $\mathcal{J}'$ uses at most one arc in $\{v_1v_j\mid j=2,...,k\}$. Indeed, if a dijoin $J'$ uses both $v_1v_i$ and $v_1v_j$ for some $2\leq i<j\leq k$, then $J'-v_1v_i$ is also a dijoin of $D'$ because every dicut containing $v_1v_i$ also contains $v_1v_j$. This implies that $\mathcal{J}_j'$'s, $j\in\{2,...,k\}$ are pairwise disjoint.
    Thus, $|\mathcal{J}_1'|=\tau-\sum_{j=2}^k |\mathcal{J}'_j|=\tau-\sum_{j=2}^k \min\{z_j, u_j\}\leq u_1+\sum_{j=2}^k (u_j-\min\{z_j, u_j\})$. The inequality follows from the fact that $\delta(v)$ is a dicut and thus has weight at least $\tau$. Since $u_1+\sum_{j=2}^k (u_j-\min\{z_j, u_j\})$ is the remaining total capacity of arcs incident to $v$, we have enough capacity for each remaining dijoin to include an arc incident to v. 
    
    \end{proof}

\begin{figure}[htbp]
	\centering
	\includegraphics[scale=0.2]{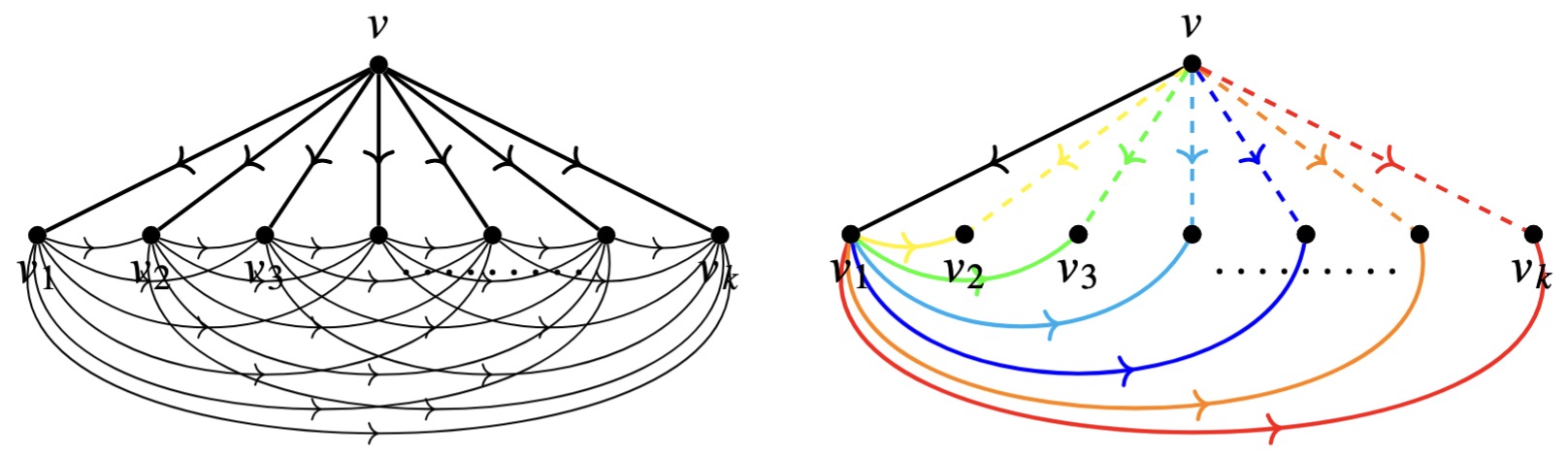}
  \caption{In the unweighted case, for each color class, one dijoin in $\mathcal{J}'$ using the solid arc is mapped to one dijoin using the dashed arc in $\mathcal{J}$ of the same color.}
  \label{fig:weight_transfer2}
\end{figure}

\section{Strongly polynomial-time algorithms}

In order to give a strongly polynomial-time algorithm, we need to store the packing in a compact way. Let $\chi_J$ be the incidence vector of a dijoin $J$. A packing of dijoins $\sum_{J\in \mathcal{J}} \lambda(J)\chi_{J}\leq w$ is stored as a family $\mathcal{J}$ of distinct dijoins and the multiplicity $\lambda(J)$ of each dijoin $J$ in the packing.
Our algorithm is given in Algorithm \ref{alg}.
\begin{algorithm}
		\caption{{\sc PackingDijoins}}
		\begin{algorithmic}[1]
			\Require A chordal digraph $D=(V,A)$ and weight $w: A\rightarrow \Z_{\geq 0}$ with minimum weight of a dicut at least $\tau$.
			\Ensure A family of dijoins $\mathcal{J}$ and their multiplicity $\lambda$ such that $\sum_{J\in \mathcal{J}}\lambda(J)=\tau$.
   \If{$|V|=1$}
   \Return{$\mathcal{J}=\{\emptyset\}$ and $\lambda(\emptyset)=\tau$}.
   \EndIf
                \State Find a simplicial vertex $v$ and let $D'$ be the digraph after deleting $v$;
                \State Compute $u'$ according to \eqref{eq:w'-matching};
                \State Construct a perfect $u'$-matching $M$ whose support is acyclic; 
                \State Compute $w'$ according to \eqref{eq:new_weight};
                \State $\mathcal{J}',\lambda'\gets$ \Call{PackingDijoins }{$D',w'$};
			\State $\mathcal{J},\lambda\gets$ \Call{MappingBack }{$\mathcal{J}',\lambda'$};
            \State \Return{$\mathcal{J},\lambda$}.
		\end{algorithmic}
  \label{alg}
	\end{algorithm}
The natural greedy way of mapping dijoins back from $D'$ to $D$, which is shown in Algorithm \ref{alg:support}, allows us to control the support of dijoins in the packing. The {\em support} of dijoins in the packing is the number of distinct dijoins $|\mathcal{J}|$. Let $n:=|V|$ and $m:=|A|$. We show that we can always build the packing of dijoins in a way that the support is bounded by $m-n+2$. To prove that this further implies that the algorithm runs in polynomial time, we first prove the following lemma:
\begin{algorithm}[htbp]
		\caption{{\sc MappingBack}}
		\begin{algorithmic}[1]
			\Require A packing of dijoins $(\mathcal{J}',\lambda')$ in $(D',w')$ such that $\sum_{J\in\mathcal{J}'}\lambda'(J)=\tau$.
			\Ensure A packing of dijoins $(\mathcal{J},\lambda)$ in $(D,w)$ such that $\sum_{J\in\mathcal{J}}\lambda(J)=\tau$.
                \State $(\mathcal{J},\lambda)\gets (\mathcal{J}',\lambda')$.
                \For{$e=v_iv_j$ with $w'(e)>w(e)$}
                \While{$w'(e)>w(e)$ and $\{J\in \mathcal{J}\mid e\in J\}\neq \emptyset$}
                \State Pick an arbitrary $J\in\mathcal{J}$ such that $e\in J$;
                \State $J'\gets J-v_iv_j+v_iv+vv_j$ if $i\leq s$ and $J'\gets J-v_iv_j+vv_j$ if $i=s+1$;
                \State $\lambda(J')\gets \min\{\lambda(J),w'(e)-w(e)\}$;
                \State $\lambda(J)\gets\lambda(J)-\lambda(J')$;
                \State $\mathcal{J}\gets \mathcal{J}+J'$;
                \State Delete $J$ from $\mathcal{J}$ if $\lambda(J)=0$;
                \State $w'(e)\gets w'(e)-\lambda(J')$.
                \EndWhile
            \EndFor
        \If{$\delta(v)$ is a dicut}
        \For{$e=vv_j$, $j\in\{2,...,k\}$ with $w(e)>\sum_{J\in\mathcal{J}, e\in J}\lambda(J)$}
                \While{$w(e)>\sum_{J\in\mathcal{J}, e\in J}\lambda(J)$ and $\big\{J\in \mathcal{J}\mid J\cap \delta(v)=\emptyset\big\}\neq \emptyset$}
                \State Pick an arbitrary $J\in\mathcal{J}$ such that $J\cap \delta(v)=\emptyset$;
                \State $J'\gets J+vv_j$;
                \State $\lambda(J')\gets \min\{\lambda(J),w(e)-\sum_{J\in\mathcal{J}, e\in J}\lambda(J)\}$;
                \State $\lambda(J)\gets\lambda(J)-\lambda(J')$;
                \State $\mathcal{J}\gets \mathcal{J}+J'$;
                \State Delete $J$ from $\mathcal{J}$ if $\lambda(J)=0$;
                \EndWhile
            \EndFor
        \For{$J\in \mathcal{J}$ with $J\cap \delta(v)=\emptyset$}
        \State $J'\gets J+vv_1$;
        \State $\lambda(J')\gets \lambda(J)$;
        \State $\mathcal{J}\gets \mathcal{J}+J'$;
        \State Delete $J$ from $\mathcal{J}$;
        \EndFor
    \EndIf
    \State \Return{$\mathcal{J},\lambda$}.
    \end{algorithmic}
  \label{alg:support}
	\end{algorithm}

\begin{lemma}\label{lemma:support}
    The support of dijoins in the packing constructed by Algorithm \ref{alg} is at most $m-n+2$, where $n=|V|$ and $m=|A|$.
\end{lemma}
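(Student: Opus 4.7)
The plan is to proceed by induction on $|V|$. In the base case $|V|=1$, we have $n=1$, $m=0$, and Algorithm~\ref{alg} returns the single (empty) dijoin with multiplicity $\tau$, so the support is $1 = m - n + 2$. For the inductive step, let $v$ be the simplicial vertex of degree $k$ removed at the top of the current call. By induction the recursive call on $D'$ (which has $m - k$ arcs and $n - 1$ vertices) returns a packing of support at most $m - n - k + 3$. It therefore suffices to show that MappingBack adds at most $k - 1$ new dijoins.

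The per-iteration observation is that each iteration of an inner while loop in Algorithm~\ref{alg:support} increases the support of $\mathcal{J}$ by at most one. Indeed, a full transfer ($\lambda(J) \le w'(e) - w(e)$) removes $J$ and adds $J'$, for a net change of at most $0$, while a partial transfer ($\lambda(J) > w'(e) - w(e)$) keeps $J$ in $\mathcal{J}$ and adds $J'$, for a net change of at most $+1$. Since a partial transfer drives the relevant deficit to zero, it is necessarily the final iteration of its surrounding while loop. Hence each while loop contributes at most $+1$ to the support, and it suffices to bound the number of while loops executed by MappingBack.

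For Case 1 ($0 < s < k$, only the first for loop runs), the arcs $e$ processed are those with $w'(e) > w(e)$, of two types: (i) $v_iv_j$ with $i \le s$, $j \ge s+1$, $x_{ij} > 0$; and (ii) $v_{s+1}v_j$ with $j \in \{s+2,\ldots,k\}$ and $u_j > u'_j$. Choosing $M$ as a basic feasible solution of the bipartite transportation LP bounds the type (i) count by $|I'| + |J'| - 1 \le s + (k - t + 1) - 1$, where $I', J'$ are the sets of positive-demand sources and sinks (using that $u'_j > 0$ only for $j \in \{t,\ldots,k\}$). The type (ii) arcs lie in $\{s+2,\ldots,t\}$, giving a count of at most $t - s - 1$. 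These sum to $k - 1$. For Case 2 ($s = 0$ and $\delta(\{v\})$ is a dicut), I would argue per $j \in \{2,\ldots,k\}$: letting $C_j$ be the initial coverage of $v_1v_j$ by $\mathcal{J}'$, if $C_j \ge u_j$ then the first for loop terminates by partial transfer (contributing $+1$) and the second for loop finds no deficit at $vv_j$ ($+0$); if $C_j < u_j$ then the first for loop exits by exhaustion of eligible dijoins ($+0$) and the second contributes at most $+1$. The third for loop replaces each remaining $J$ by $J + vv_1$, a dijoin containing $vv_1$ and therefore distinct from every earlier dijoin in $\mathcal{J}$, contributing $0$. Summing over $j$ yields at most $k - 1$.

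The main obstacle is the Case 1 telescoping bound, which requires being deliberate about choosing the bipartite $u'$-matching $M$ as a basic feasible solution so that its support satisfies $|I'| + |J'| - 1 \le s + k - t$; combined with the type (ii) count of at most $t - s - 1$, the two bounds telescope exactly to $k - 1$. A secondary subtlety is the Case 2 per-$j$ dichotomy, which relies on recognizing that a while loop terminates either by partial transfer (contributing $+1$ and leaving no further deficit at $vv_j$) or by running out of eligible dijoins (contributing $0$ but leaving a deficit picked up by the second for loop), ensuring no double-counting across the two loops.
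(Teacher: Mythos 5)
Your proof is correct and follows essentially the same route as the paper's: the same observation that within each while loop only the final (partial-transfer) run can increase the support, the same count of at most $(s+k-t)+(t-s-1)=k-1$ arcs with $w'(e)>w(e)$ via an acyclic-support matching, and the same per-$j$ dichotomy between the two while loops in the dicut case. The only cosmetic difference is that you package the per-vertex bound of $k-1$ into an explicit induction on $|V|$, whereas the paper sums the increments over the elimination ordering; these are the same computation.
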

\begin{proof}
     In the base step, when $|V|=1$, the packing is the empty set. In each iteration, we remove one simplicial vertex. When constructing $\mathcal{J}$ from $\mathcal{J}'$, $|\mathcal{J}|$ can potentially increase. We bound the number of new dijoins in $\mathcal{J}$ in each iteration, depending on whether $\delta(v)$ is a dicut. 
     
     If $\delta(v)$ is not a dicut, the new dijoins are the ones using arcs $e$ with weight $w'(e)>w(e)$. Those arcs are either in the perfect $u'$-matching $M$ or from $v_{s+1}$ to $\{v_{s+2},...,v_t\}$. Note that $M$ is actually induced on $\{v_1,v_2,...,v_s\}\cup \{v_t,v_{t+1},...,v_k\}$ since $u_j'=0$ for $j\in \{s+1,...,t-1\}$. From the standard dimension counting argument, we may assume that the support of $M$ is acyclic, and thus its size is at most $s+k-t$. Thus, there are at most $(s+k-t)+(t-s-1)=k-1$ arcs $e$ with $w'(e)>w(e)$. Observe that for each such arc $e$, only the last run of the first while loop of Algorithm \ref{alg:support} can increase $|\mathcal{J}|$. Indeed, whenever $|\mathcal{J}|$ increases, in line $9$ of the algorithm, $\lambda(J)\neq 0$, which means in line $6$, $\lambda(J')\gets w'(e)-w(e)$, after which the while loop terminates. Therefore, $|\mathcal{J}|$ increases by at most $k-1$ in this iteration. 

     If $\delta(v)$ is a dicut, each arc $e$ with $w'(e)>w(e)$ has the form $e=v_1v_j$ for some $j\in\{2,...,k\}$. Observe that for each such $e=v_1v_j$, if $|\mathcal{J}|$ increases during the first while loop of Algorithm \ref{alg:support}, then after its running $\sum_{J\in\mathcal{J}, vv_j\in J} \lambda(J)=w'(e)-w(e)=w(vv_j)$, in which case $|\mathcal{J}|$ will not increase during the second while loop when $e=vv_j$. A similar argument as in the previous case shows that for each arc $e=vv_j$, only the last run of the second while loop can increase $|\mathcal{J}|$. This shows that for each $j\in\{2,...,k\}$, $|\mathcal{J}|$ increases by at most $1$ during the two while loops when $e=v_1v_j$ and $e=vv_j$. The last for loop does not increase $|\mathcal{J}|$. Therefore, $|\mathcal{J}|$ increases by at most $k-1$ in this iteration as well.
     
     Since $k$ is the degree of $v$ in the current graph, there are at most $m-n+2$ distinct dijoins at the end of the algorithm.
\end{proof}

Now, we analyze the running time. The perfect elimination scheme can be computed in time $O(m+n)$  \cite{roseluekertarjan1976}. Suppose such a perfect elimination scheme is given. In each iteration, we construct a perfect $u'$-matching $M$ in a complete bipartite graph induced on $N(v)$. The greedy algorithm that repeatedly saturates the degree requirement of each vertex takes time $O(d(v))$, where $d(v)$ is the degree of $v$ in the current graph. For each call of Algorithm \ref{alg:support}, there are at most $O(d(v))$ for loops, and at most $O(|\mathcal{J}|)$ while loops. Therefore, it takes  $O(|\mathcal{J}|d(v))$ time to construct $\mathcal{J}$ from $\mathcal{J}'$. Together with the fact from Lemma \ref{lemma:support} that $|\mathcal{J}|\leq m-n+2$, Algorithm \ref{alg} runs in time $O(m|\mathcal{J}|+n)=O(m^2+n)$ in total.

\section{Conclusion and discussion}
We proved that the Edmonds-Giles conjecture is true for a digraph $D=(V,A)$ whose underlying undirected graph is chordal. Moreover, we may assume that the digraph is transitively closed, meaning that $uv,vw\in A$ implies $uw\in A$. This is because otherwise we can add arc $uw$ to $A$ and set its weight to $0$. This does not affect the structure and weights of the dicuts. Thus, we may assume that $D$ is the comparability digraph of a poset. Our proofs more generally imply that a minimal counterexample to the Edmonds-Giles conjecture does not contain a vertex whose neighbors form a chain in the poset. As a generalization of chordal graphs, a graph is \textit{$k$-chordal} if there is no chordless cycle of length more than $k$. We further conjecture that the Edmonds-Giles conjecture holds true if the underlying undirected graph is $5$-chordal. In a transitively closed digraph, every $(2k+1)$-chordal graph is also $2k$-chordal. Thus, it suffices to prove the conjecture for $4$-chordal graphs.  As Schrijver's counterexample has a chordless cycle of length $6$, this is the best possible.

\begin{credits}
\subsubsection{\ackname} 
We would like to thank two anonymous reviewers for their careful reading and constructive feedback on the paper. We also thank Ahmad Abdi and Olha Silina for valuable discussions on this topic. This work was supported in part by ONR grant N00014-22-1-2528, the Air Force Office of
Scientific Research under award number FA9550-23-1-0031, and a Balas PhD Fellowship from the Tepper School at Carnegie Mellon University.
\end{credits}

%
%
\newpage
\bibliographystyle{splncs04}
\bibliography{reference}
%





\end{document}